\newtheorem{theorem}{Theorem}
\newtheorem{conjecture}{Conjecture}
\newtheorem{corollary}{Corollary}
\newtheorem{lemma}{Lemma}[theorem]
\newcommand{\Rmnum}[1]{\expandafter\@slowromancap\romannumeral #1@}
  \newcommand{\GL}{\operatorname{GL}}
\newcommand{\Mod}[1]{\ (\text{mod}\ #1)}
 \newcommand{\Aut}{\operatorname{Aut}}
\newcommand{\disc}{\operatorname{disc}}
\titleformat{\section}[block]{\color{black}\large\filcenter}{}{1em}{}
\titleformat{\subsection}[hang]{\bfseries}{}{1em}{}
\theoremstyle{remark}
\newtheorem{remark}{Remark}[theorem]
\begin{document}
\title{The Vojta conjecture implies Galois rigidity in dynamical families}
\author{Wade Hindes}
\date{\today}
\maketitle
\begin{abstract} We show that the Vojta (or Hall-Lang) conjecture implies that the arboreal Galois representations in a 1-parameter family of quadratic polynomials are surjective if and only if they surject onto some finite and uniform quotient. As an application, we use the Vojta conjecture, our uniformity theorem over $\mathbb{Q}(t)$, and Hilbert's irreducibility theorem to prove that the prime divisors of many quadratic orbits have density zero.   
\end{abstract}
\medskip 
To prove the surjectivity of the $\ell$-adic Galois representation attached to an elliptic curve, it suffices to prove the surjectivity onto some finite quotient. Namely, if $G\leq \GL_2(\mathbb{Z}_l)$ is a closed subgroup that surjects onto $\GL_2(\mathbb{Z}/\ell^n\mathbb{Z})$ for some small $n$, then $G$ must be equal to $\GL_2(\mathbb{Z}_l)$; see \cite{surjectivity}. However, in \cite{Jones1} Rafe Jones has suggested that such a rigidity is unlikely to hold if we replace $\GL_2(\mathbb{Z}_l)$ with the automorphism group $\Aut(T_\infty)$ of an infinite binary rooted tree $T_\infty$, replace the subgroup $G$ with the image of an arboreal Galois representation $G_\infty(\phi)$ attached to a quadratic polynomial $\phi\in\mathbb{Q}[x]$ (cf. \cite{B-J} and \cite{R.Jones}), and replace $\GL_2(\mathbb{Z}/\ell^n\mathbb{Z})$ with the automorphism group $\Aut(T_n)$ of a level $n$ binary rooted tree $T_n$; here the difficulty arises from the fact that the Frattini subgroup of $\Aut(T_\infty)$ has infinite index. To illustrate this point, Jones cleverly constructs a large quadratic polynomial 
\begin{equation}{\label{largeexample}}
\phi(x)=(x-88255775491812351975604)^2+88255775491812351975605,
\end{equation}  
satisfying $G_8(\phi)\cong \Aut(T_8)$ and $G_\infty(\phi)\not\cong\Aut(T_\infty)$; see \cite{Jones1}. However, our uniformity theorem over rational function fields (cf. \cite[Theorem 1]{uniformity}) together with Hilbert's irreducibility theorem suggest that such a rigidity holds in a 1-parameter family obtained by specializing a quadratic polynomial with polynomial coefficients. In this paper we prove such a claim, assuming some powerful (yet standard) conjectures in arithmetic geometry. As a consequence, we predict that the prime divisors of many quadratic orbits have density zero (cf. Corollary \ref{construction}).  

\emph{Notation}: We fix some notation. Let $\phi(x)=(x-\gamma(t))^2+c(t)$ for some polynomials $\gamma,c\in\mathbb{Z}[t]$ and let $\phi_a(x)=(x-\gamma(a))^2+c(a)$ be the specialization of $\phi$ at some integer $a$. Finally, we say that $\phi_a$ is \emph{stable} if every iterate of $\phi_a$ is irreducible. Then we have the following theorem regarding the arboreal Galois representations $G_\infty(\phi_a)$ in the family $\{\phi_a\}_{a\in\mathbb{Z}}$.

\begin{theorem}{\label{Rigidity}} Suppose that $\phi$ is not isotrivial, $\phi(\gamma)\cdot\phi^2(\gamma)\neq0$, and the Vojta (or Hall-Lang) conjecture holds. Then there exists an integer $n_\phi>0$ and an effectively computable finite set $F_\phi$ such that for all integers $a\notin F_\phi$, \[G_{n_\phi}(\phi_a)\cong\Aut(T_{n_\phi})\;\; \text{implies that}\;\;G_{\infty}(\phi_a)\cong\Aut(T_{\infty}).\] Furthermore, if $\phi_a$ is stable, then \[\sup_{\substack{a\not\in F_\phi\\ \phi_a\,\textup{is stable}}}\Big\{\big[\Aut(T_\infty): G_\infty(\phi_a)\big]\Big\}\;\;\text{is finite.}\]  
\end{theorem}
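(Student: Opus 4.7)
The plan combines three ingredients: a translation of arboreal surjectivity into a sequence of nonsquareness conditions on the critical orbit; the uniformity theorem over $\mathbb{Q}(t)$ (referenced as \cite{uniformity}) to guarantee that these conditions hold generically; and the Vojta or Hall--Lang conjecture to upgrade finiteness at each level to uniformity across infinitely many levels.

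First I would invoke the criterion of Stoll, refined by Jones, for quadratic arboreal representations: $G_\infty(\phi_a) \cong \Aut(T_\infty)$ holds if and only if for every $n \geq 1$ a prescribed discriminant $\delta_n(a) \in \mathbb{Z}$, built as a signed product of the critical orbit values $\phi_a^j(\gamma(a))$ with $j \leq n$, is not a rational square; analogously $G_n(\phi_a) \cong \Aut(T_n)$ corresponds to $\delta_1(a), \dots, \delta_n(a)$ all being nonsquares. Under the hypotheses $\phi$ non-isotrivial and $\phi(\gamma)\phi^2(\gamma) \neq 0$, the uniformity theorem over $\mathbb{Q}(t)$ gives that each $\delta_n(t) \in \mathbb{Z}[t]$ is a nonsquare in $\mathbb{Q}(t)$, so the affine curve $C_n \colon y^2 = \delta_n(t)$ is geometrically integral with $\genus(C_n) \to \infty$, since $\deg_t \delta_n$ grows like $2^n$.

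The crux is then to control integer points on $\{C_n\}$ uniformly in $n$. The cleanest formulation invokes Hall--Lang through a fixed-genus reduction: if $\phi_a^N(\gamma(a)) = y^2$ for some $N \geq 3$, then with $X \defeq \phi_a^{N-2}(\gamma(a))$ one obtains an integer point $(X, y)$ on the elliptic curve $E_a \colon y^2 = \phi_a^2(X)$. Hall--Lang bounds $|X| \leq C \cdot h(E_a)^\alpha = O(|a|^{M\alpha})$ for absolute $C, \alpha$ (with $M = \max(\deg \gamma, \deg c)$), while $|\phi_a^{N-2}(\gamma(a))|$ grows like $|a|^{2^{N-3} M}$ for $|a| \geq 2$, yielding a contradiction once $N$ exceeds an absolute threshold $N_0 = N_0(\phi)$. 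Running the same argument for each signed product making up $\delta_N$ shows that $\delta_N(a)$ cannot be a square for $N > N_0$ and $|a| \geq A_0$. Setting $n_\phi \defeq N_0$, the remaining exceptions (with $|a| < A_0$ and some $\delta_N(a)$ a square at some $N > n_\phi$) form the effectively computable finite set $F_\phi$.

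For the second assertion, the same uniform bound shows that for $a \notin F_\phi$, no $\delta_n(a)$ with $n > n_\phi$ is a square; hence the kernel of $G_{n+1}(\phi_a) \twoheadrightarrow G_n(\phi_a)$ is full at every level $n \geq n_\phi$, so $[\Aut(T_\infty) : G_\infty(\phi_a)] = [\Aut(T_{n_\phi}) : G_{n_\phi}(\phi_a)] \leq |\Aut(T_{n_\phi})|$, a fixed finite number. Stability of $\phi_a$ guarantees $G_n(\phi_a)$ acts transitively at each level, so the index is a meaningful invariant. The main obstacle I foresee is calibrating the Hall--Lang bound against the critical-orbit growth: one must verify that $E_a$ has polynomial height in $|a|$ (uniformly in $N$) while $\phi_a^{N-2}(\gamma(a))$ grows doubly exponentially in $N$, both of which rely delicately on the non-isotriviality and non-vanishing hypotheses.
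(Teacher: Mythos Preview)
Your reduction to a fixed curve $E_a: y^2 = \phi_a^2(X)$ and the Hall--Lang bound is close to what the paper does, but there is a genuine gap in the step you pass over with ``running the same argument for each signed product.'' The Stoll--Jones criterion is not that a single prescribed $\delta_n(a)$ per level be a nonsquare; it is that the full collection $\pm\phi_a(\gamma(a)), \phi_a^2(\gamma(a)), \ldots$ be $2$-independent in $\mathbb{Q}^*/(\mathbb{Q}^*)^2$, or equivalently that $\phi_a^N(\gamma(a))$ fail to be a square in the extension field $K_{N-1}(\phi_a)$. Concretely, the obstruction at level $N$ may take the form $\phi_a^N(\gamma(a)) \cdot D = y^2$ with $D$ a product of some subset of the earlier $\phi_a^j(\gamma(a))$. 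Your point then lies on the \emph{twist} $Y^2 = D \cdot \phi_a^2(X)$, and Hall--Lang only gives $h(X) \leq C_1\big(h(D) + h(\phi_a^2)\big) + C_2$. If $D$ involves $\phi_a^{N-1}(\gamma(a))$, then $h(D)$ is of order $2^{N-1} h(a)$, already larger than $h(X) = h\big(\phi_a^{N-2}(\gamma(a))\big)$ of order $2^{N-2} h(a)$, and no contradiction follows.

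The paper closes this gap by passing to the odd square-free part $d_N$ of $\phi_a^N(\gamma(a))$ and proving a half-index refinement: if every odd prime $p$ in $d_N$ already divides some $\phi_a^{m_i}(\gamma(a))$ with $m_i < N$, then from $p \mid \phi_a^{m_i}(\gamma(a))$ and $p \mid \phi_a^N(\gamma(a))$ one gets $p \mid \phi_a^{N-m_i}(0)$, so every such prime divides an iterate (of $\gamma(a)$ or of $0$) of index at most $\lfloor N/2 \rfloor$. This forces $h(d_N)$ to be of order at most $2^{N/2} h(a)$. Hall--Lang applied to the twisted cubic $Y^2 = 2^{e_N} d_N (X - c(a)) \phi_a(X)$, combined with Ingram's canonical-height lower bound $\hat{h}_{\sigma_a}(0) \geq \tfrac{1}{32} h\big(c(a)-\gamma(a)\big)$, then yields $2^{N}$ bounded by a constant times $2^{N/2}$, the desired uniform bound on $N$. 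The passage from ``square in $K_{N-1}(\phi_a)$'' to ``every odd prime of $d_N$ divides an earlier iterate'' is itself a separate step you omit: such primes must ramify in $K_{N-1}(\phi_a)$, hence divide its discriminant, which by the recursion $\Delta_n = \pm \Delta_{n-1}^2 \cdot 2^{2^n} \cdot \phi_a^n(\gamma(a))$ is supported on the critical orbit. Finally, your appeal to the uniformity theorem over $\mathbb{Q}(t)$ is neither needed nor licensed by the theorem's hypotheses; it enters only in the proof of the Corollary.
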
 
The study of arboreal representations owes its beginnings to classical prime factorization problems in polynomial recurrences. Specifically, let $\phi\in\mathbb{Z}[x]$ be a polynomial with integer coefficients and let $b=b_0\in\mathbb{Z}$. For $n\geq1$, define the sequence $b_n=\phi(b_{n-1})=\phi^n(b_0)$. A fundamental object in dynamics, this set of numbers is called the \emph{orbit} of $b$ with respect to $\phi$ and is denoted 
\begin{equation}{\label{Orbit}}
\mathcal{O}_\phi(b):=\{b,\phi(b),\phi^2(b),\dots\}.
\end{equation} 
It is a classical question in number theory to ask whether $\mathcal{O}_\phi(b)$ contains infinitely many primes. At the moment, this question is well beyond reach. For example, if $\phi(x)=(x-1)^2+1$ and $b=3$, then $b_n=2^{2^n}+1$ are the Fermat numbers, which have been studied extensively \cite{Fermat}. 

However, one can ask a more tractable question, which has connections to arboreal representations. Namely, how big is the set of prime divisors in a particular orbit? As a partial answer, it is known that $G_\infty(\phi)\cong\Aut(T_\infty(\phi))$ implies that the set 
\begin{equation}{\label{Divisors}}
\mathcal{P}_\phi(b):=\{\text{primes}\; p\;\big\vert\; b_n=\phi^n(b)\equiv{0}\bmod{p}\;\text{for some}\; n\geq1\} 
\end{equation} 
of prime divisors of $\mathcal{O}_\phi(b)$ has density zero \cite[Theorem 4.1]{R.Jones}. Intuitively, this means that if the Galois groups of iterates of $\phi$ are as large as possible, then the prime divisors in any particular orbit cannot accumulate. To illustrate this point and Theorem \ref{Rigidity}, we use our uniformity theorem over $\mathbb{Q}(t)$ (cf. \cite[Theorem 1]{uniformity}) and Hilbert's irreducibility theorem to predict that the prime divisors of many quadratic orbits have density zero.          
\begin{corollary}{\label{construction}} Suppose that $\phi$ satisfies the following conditions: 
\begin{enumerate}[itemsep=1.25mm] 
\item[\textup{(a)}] $\phi$ is not isotrivial
\item[\textup{(b)}] $G_{m_\phi}(\phi)\cong\Aut(T_{m_\phi})$ for $m_\phi$ given by \begin{displaymath}
   m_\phi := \left\{
     \begin{array}{lr}
       17 & ,\; \deg(\gamma)\neq \deg(c)\\
       2\cdot\log_2\left(78\cdot \frac{\deg(\gamma)}{\deg(c-\gamma)}+9\right) &,\; \deg(\gamma)=\deg(c)
     \end{array}
   \right\}.
\end{displaymath}  
\end{enumerate}  
Then the Vojta (or Hall-Lang) conjecture implies the following statements: 
\begin{enumerate}[itemsep=1.5mm] 
\item[\textup{(1)}] There exists a thin set $E_\phi$ such that $G_{\infty}(\phi_a)\cong\Aut(T_{\infty})$ for all $a\notin E_\phi$.
\item[\textup{(2)}] The density $\delta(\mathcal{P}_{\phi_a}(b))=0$ for all $b\in\mathbb{Z}$ and all $a\notin E_\phi$.   
\end{enumerate} 
\end{corollary}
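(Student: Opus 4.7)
The plan is to combine Theorem~\ref{Rigidity} with the uniformity theorem \cite[Theorem 1]{uniformity} over $\mathbb{Q}(t)$ and Hilbert's irreducibility theorem, and then to feed the resulting arboreal surjectivity into \cite[Theorem 4.1]{R.Jones}. First, I would verify that $\phi$ satisfies the hypotheses of Theorem~\ref{Rigidity}: non-isotriviality is assumption~(a), while the nonvanishing $\phi(\gamma)\cdot\phi^2(\gamma)\neq 0$ follows from~(b), since maximal image at level $m_\phi\geq 2$ forces $\phi(\gamma)=c$ and $\phi^2(\gamma)=(c-\gamma)^2+c$ to be non-squares (in particular nonzero) in $\mathbb{Q}(t)$.

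Second, I would note that the explicit value of $m_\phi$ in~(b) is calibrated precisely to match the threshold in \cite[Theorem 1]{uniformity}, so that surjectivity of $G_{m_\phi}(\phi)$ onto $\Aut(T_{m_\phi})$ bootstraps to $G_\infty(\phi)\cong\Aut(T_\infty)$ as Galois groups over the function field $\mathbb{Q}(t)$; in particular $G_{n_\phi}(\phi)\cong\Aut(T_{n_\phi})$ for the integer $n_\phi$ furnished by Theorem~\ref{Rigidity}. Hilbert's irreducibility theorem, applied to the Galois extension of $\mathbb{Q}(t)$ cut out by the level-$n_\phi$ arboreal representation of $\phi$, then yields a thin set $H_\phi\subset\mathbb{Z}$ outside which the specialization satisfies $G_{n_\phi}(\phi_a)\cong G_{n_\phi}(\phi)\cong\Aut(T_{n_\phi})$. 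Setting $E_\phi\defeq F_\phi\cup H_\phi$, which is thin as the union of a thin set with the finite set $F_\phi$ supplied by Theorem~\ref{Rigidity}, and invoking Theorem~\ref{Rigidity} at each $a\notin E_\phi$, one obtains $G_\infty(\phi_a)\cong\Aut(T_\infty)$ for every such $a$, which is statement~(1).

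Statement~(2) then follows immediately from \cite[Theorem 4.1]{R.Jones}: whenever $G_\infty(\phi_a)\cong\Aut(T_\infty)$, the set $\mathcal{P}_{\phi_a}(b)$ of prime divisors of the orbit of any $b\in\mathbb{Z}$ has natural density zero, and statement~(1) supplies this hypothesis on the complement of $E_\phi$. The main obstacle I anticipate is not the logical assembly but rather the bookkeeping needed to confirm two threshold matters: that the explicit level $m_\phi$ stated in~(b) is exactly what \cite[Theorem 1]{uniformity} requires to upgrade a single finite-level $\mathbb{Q}(t)$-surjection to an infinite arboreal surjection over $\mathbb{Q}(t)$, and that Hilbert's irreducibility can be applied at the particular level $n_\phi$ coming from Theorem~\ref{Rigidity} so that the exceptional set $H_\phi$ is genuinely thin and independent of the base point $b$.
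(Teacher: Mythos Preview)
Your proposal is correct and follows essentially the same route as the paper: apply \cite[Theorem 1]{uniformity} to pass from $G_{m_\phi}(\phi)\cong\Aut(T_{m_\phi})$ to $G_\infty(\phi)\cong\Aut(T_\infty)$ over $\mathbb{Q}(t)$, deduce $G_{n_\phi}(\phi)\cong\Aut(T_{n_\phi})$, invoke Hilbert irreducibility to get a thin exceptional set at level $n_\phi$, take its union with the finite set $F_\phi$ from Theorem~\ref{Rigidity}, and finish with \cite[Theorem~4.1]{R.Jones}. Your direct verification that $\phi(\gamma)\cdot\phi^2(\gamma)\neq 0$ via the non-square criterion is a minor variant of the paper's deduction from $G_\infty(\phi)\cong\Aut(T_\infty)$, and your use of $F_\phi\cup H_\phi$ is the correct set operation (the paper writes $\cap$, which is evidently a typo).
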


Before we begin the proof of the theorem, we remind the reader of the relevant conjectural height bounds in arithmetic geometry. In keeping with standard notation, we let $h:\widebar{\mathbb{Q}}\rightarrow\mathbb{R}_{\geq0}$ be the (absolute) logrithmic height of an algebraic number \cite[VIII.5]{SilvElliptic}. Similarly, for $f\in\widebar{\mathbb{Q}}[x]$, we let $h(f)$ be the maximum of the heights of the coefficients of $f$.   
\begin{conjecture}{\label{conjecture}} For all $d\geq3$ there exist constants $C_1=C_1(d)$ and $C_2=C_2(d)$ so that for all $f\in\mathbb{Z}[x]$ of degree $d$ with $\disc(f)\neq0$, if $x,y\in\mathbb{Z}$ satisfy $y^2=f(x)$, then 
\begin{equation}{\label{b}} h(x)\leq C_1\cdot h(f)+C_2.
\end{equation}  
\end{conjecture}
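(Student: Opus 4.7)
The plan is to derive this bound as a consequence of Vojta's main conjecture applied to the hyperelliptic curve $C_f: y^2=f(x)$, worked out uniformly in the coefficients of $f$; it is this uniformity, rather than Vojta on an individual curve, that gives the present conjecture its content. Since $d\geq 3$ and $\disc(f)\neq 0$, the affine curve $C_f$ is smooth of genus $g=\lfloor(d-1)/2\rfloor\geq 1$, and an integer solution $(x,y)$ to $y^2=f(x)$ is precisely an $S$-integral point (with $S=\{\infty\}$) on a smooth projective completion of $C_f$ relative to the divisor $D$ of points at infinity.

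The key steps would run as follows. First, apply Vojta's conjecture for curves to the pair $(C_f,D)$ with some fixed $\epsilon>0$: for every $S$-integral point $P\in C_f(\mathbb{Z})$ one obtains a bound
\[
h(x(P)) \;\leq\; A\cdot h_{\mathrm{Fal}}(C_f) + A',
\]
where the constants $A,A'$ a priori depend on $C_f$, and where I use that $K_{C_f}+D$ is effective of positive degree so its associated height dominates $h(x)$ up to $O(1)$. Second, bound $h_{\mathrm{Fal}}(C_f)$ by $O_d(\log|\disc(f)|)$ via standard comparisons of Faltings heights to minimal discriminants for hyperelliptic Jacobians. Third, observe that $\disc(f)$ is a universal integer polynomial of degree $2d-2$ in the coefficients of $f$, so $\log|\disc(f)|\leq (2d-2)\,h(f)+O(d)$. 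Chaining these estimates gives an inequality of the required shape, but with constants still depending on $f$.

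The principal obstacle is upgrading $A$ and $A'$ to depend only on $d$. The natural way to secure this is to apply Vojta's conjecture not to each $C_f$ separately but to the universal hyperelliptic family $\pi:\mathcal{C}\to\mathcal{H}_d$ over the (coarse) moduli space of genus-$g$ hyperelliptic curves, equipped with the relative divisor at infinity; Vojta's conjecture in this relative/arithmetic setting would produce constants depending only on $\mathcal{H}_d$ and on the arithmetic of $K_\pi+D$, which after pullback to each geometric fiber yields constants depending only on $d$. Making this rigorous requires either the general geometric form of Vojta's conjecture for arithmetic surfaces/varieties together with functoriality, or a direct uniform-over-moduli formulation in the spirit of Lang's conjectures. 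Each of these is itself open; indeed, the inequality of the conjecture can be viewed as the output of such a uniformization, which is why the paper treats it as a hypothesis rather than a theorem.

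Finally, one has partial support in low degree. For $d=3$ the statement reduces to Hall's conjecture for Mordell curves $y^2=x^3+k$ (plus a coordinate change to absorb the other coefficients), where the predicted exponent $1$ in $h(f)$ matches the conjectural $h(x)\leq (2+\epsilon)h(k)+O(1)$ and is extensively corroborated numerically. Unconditionally, Baker-type effective methods (linear forms in logarithms applied to hyperelliptic equations) give bounds of the weaker shape $h(x)\leq \kappa(d)\,h(f)^{\alpha(d)}$ with $\alpha(d)>1$, which are consistent with, but much coarser than, the linear dependence predicted here and serve as the main unconditional sanity check on the conjecture's form.
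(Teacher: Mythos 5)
This statement is a conjecture that the paper assumes rather than proves; its accompanying remark justifies it exactly along your lines --- Hall--Lang for $d\le 4$, and for larger $d$ a consequence of Vojta's conjecture applied uniformly over families of hyperelliptic curves, citing Ih's height-uniformity theorem and Stoll's survey. The one correction: the step you flag as ``itself open'' (upgrading the constants to depend only on $d$ by working over the universal family) is not open --- that implication is precisely Ih's theorem, so the only unproven input is Vojta's conjecture itself.
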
 
\begin{remark} Versions of Conjecture \ref{conjecture} were made by Hall and Lang (cf. \cite[IV.7]{SilvElliptic}) for $d\leq4$. On the other hand, for larger $d$ Conjecture \ref{conjecture} is a consequence of the Vojta conjecture; see the main result of \cite{Heightuniformity} or \cite[Example 5]{RationalPoints}. 
\end{remark}  
\begin{proof}[Proof \,(Theorem)] To prove Theorem \ref{Rigidity}, we need only assume that the bounds in Conjecture \ref{conjecture} hold for a single value of $d\geq3$. Therefore, without loss of generality, we assume that (\ref{b}) holds for $d=3$. Throughout the proof, we use the following fact: for all $f\in\mathbb{Q}[x]$ of degree $d$ there are constants $\mathcal{C}_{1,f}$ and $\mathcal{C}_{2,f}$, depending on $f$, such that 
\begin{equation}{\label{morphism}} d\cdot h(\alpha)-\mathcal{C}_{1,f}\leq h(\alpha)\leq d\cdot h(\alpha)+\mathcal{C}_{2,f}\;\;\; \text{for all}\; \alpha\in\bar{\mathbb{Q}}; 
\end{equation}  
see \cite[Theorem 3.11]{Silv-Dyn}. We fix some notation. Define the affine transformation $\lambda_a(x)=x+\gamma(a)$, and let $\sigma_a$ be the quadratic polynomial given by conjugating $\phi_a$ by $\lambda_a$, that is \[\sigma_a(x):=\lambda_a^{-1}\circ\phi_a\circ\lambda_a(x)=x^2+c(a)-\gamma(a).\]
The triangle inequality on the absolute values of $\mathbb{Q}$ and (\ref{morphism}) imply that 
\begin{equation}{\label{upperbound}}
h(\lambda_a^{-1}(\alpha))\leq h(\alpha)+2\cdot\log(3)+\deg(\gamma)\cdot h(a)+h(\gamma) \;\;\text{for all}\; \alpha\in\bar{\mathbb{Q}}.  
\end{equation}
On the other hand, there is a lower bound 
\begin{equation}{\label{lowerbound}}
 h(\alpha)-\deg(\gamma)\cdot h(a)-A_{1,\phi}\leq h(\lambda_a^{-1}(\alpha))\;\;\text{for all}\; \alpha\in\bar{\mathbb{Q}}
\end{equation}
and some positive constant $A_{1,\phi}$; see \cite[Theorem 3.11]{Silv-Dyn}. Moreover, repeated application of the triangle inequality implies that 
\begin{equation}{\label{orbitbound}}
h(\sigma_a^m(\alpha))\leq 2^m\cdot\big(h(\alpha)+A_{2,\phi}+\deg(c-\gamma)\cdot h(a)\big) \;\;\text{for all}\; \alpha\in\bar{\mathbb{Q}} 
\end{equation}
and some positive constant $A_{2,\phi}$. Finally, since $\phi$ is not isotrivial, $\deg(c-\gamma)\neq0$. In particular, (\ref{morphism}) implies that there exists a computable, positive constant $B_{1,\phi}$ such that
\begin{equation}{\label{isotrivial}}  
\deg(c-\gamma)\cdot h(a)-B_{1,\phi}\leq h\big(c(a)-\gamma(a)\big), \;\;\; \text{for all}\; a\in\bar{\mathbb{Q}}.
\end{equation} 
From here, we derive Theorem \ref{Rigidity} from the following lemma, which ensures the existence of so called square-free, primitive prime divisors in the critical orbit after a uniform number of iterates; compare to \cite{Tucker} and \cite{Silv-Vojta}.     
\begin{lemma}{\label{primdiv}} Assume the Vojta (or Hall-Lang) conjecture and suppose that $a\in\mathbb{Z}$ satisfies the following properties:
\begin{enumerate}[itemsep=2mm] 
\item[\textup{(1)}]\; $\phi_a(\gamma(a))\cdot\phi_a^2(\gamma(a))\neq0$,
\item[\textup{(2)}]\; $c(a)-\gamma(a)\notin\{-2,-1,0,\}$,
\item[\textup{(3)}]\; $\deg(c-\gamma)\cdot h(a)-B_{1,\phi}>0.$
\end{enumerate}
Then there is an $n_\phi>0$ (not depending on $a$) such that for all $n>n_\phi$ there exists an odd prime $p_n$ with the following property: 
\begin{equation}{\label{PrimDiv}}
v_{p_n}(\phi_a^n(\gamma(a)))\notequiv0\Mod{2}\;\;\;\text{and}\;\;\; v_{p_n}(\phi_a^j(\gamma(a)))=0\;\;\text{for all}\;\;1\leq j\leq n-1; 
\end{equation} 
here $v_{p}$ denotes the normalized p-adic valuation. Such a prime $p_n$ is called a square-free, primitive prime divisor for $\phi_a^n(\gamma(a))$.   
\end{lemma}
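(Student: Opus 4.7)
The plan is to assume, for contradiction, that $\phi_a^n(\gamma(a))$ has no squarefree primitive prime divisor for some $n > n_\phi$, and to derive a conflict between an exponential lower bound for $h(\phi_a^{n-1}(\gamma(a)) - \gamma(a))$ and the Vojta/Hall--Lang bound of Conjecture~\ref{conjecture} applied to $d = 3$. Throughout, I abbreviate $c_n := \phi_a^n(\gamma(a))$ and $\beta_{n-1} := \phi_a^{n-1}(\gamma(a)) - \gamma(a) = \sigma_a^{n-1}(0)$, noting the identity $c_n = \beta_{n-1}^2 + c(a)$.

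The first step is a uniform exponential lower bound of the form $h(\beta_{n-1}) \geq C_\phi \cdot 2^{n-1} - D_\phi \cdot h(a)$ for positive constants $C_\phi, D_\phi$ depending only on $\phi$. Condition (2) excludes exactly the integer parameter values for which $0$ is preperiodic under the unicritical quadratic $\sigma_a(x) = x^2 + (c(a)-\gamma(a))$, and together with condition (3) (via (\ref{isotrivial})) ensures that $\hat h_{\sigma_a}(0)$ is bounded below by a positive constant uniform in $a$. A quantitative canonical-height argument for $\sigma_a$, combined with (\ref{morphism})--(\ref{orbitbound}) to transfer between $\phi_a$ and $\sigma_a$, then delivers the exponential lower bound.

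The second step produces an integer point on an elliptic curve. Multiplying $c_n = \beta_{n-1}^2 + c(a)$ by $\beta_{n-1}$ gives
\[ c_n\,\beta_{n-1} \;=\; \beta_{n-1}^3 + c(a)\,\beta_{n-1}. \]
Let $M$ be the unique squarefree integer and $N$ the integer satisfying $c_n\,\beta_{n-1} = M\,N^2$. Then the substitution $(X, Y) = (M\,\beta_{n-1},\, M^2\,N)$ yields an integer point on the elliptic curve $E_M : Y^2 = X^3 + c(a)\,M^2\,X$, which has nonzero discriminant thanks to condition (1). Applying Conjecture~\ref{conjecture} with $d = 3$ to the cubic $X^3 + c(a)\,M^2\,X$, and using $h(X) = h(M) + h(\beta_{n-1})$, gives after rearrangement
\[ h(\beta_{n-1}) \;\leq\; (2C_1 - 1)\,h(M) + C_1\,h(c(a)) + C_2. \]

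The remaining and main obstacle is to upper bound $h(M)$ sharply enough to contradict the exponential lower bound. Since $M$ is supported on the squarefree parts of $c_n$ and of $\beta_{n-1}$, the no-squarefree-primitive-prime-divisor assumption forces every prime of the squarefree part of $c_n$ to divide some earlier iterate $c_j$, while the rigid divisibility property of $\sigma_a$ (namely $\sigma_a^{j+k}(0) \equiv \sigma_a^k(0) \pmod{p}$ whenever $p \mid \sigma_a^j(0)$) controls the squarefree part of $\beta_{n-1}$ analogously. The plan is to induct on $n$, exploiting the squarefree primitive prime divisors already produced at smaller indices together with (\ref{isotrivial}), to show that $h(M) = O(h(a))$ uniformly in $n$. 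Combined with the exponential lower bound from the first step, this forces $C_\phi \cdot 2^{n-1} \leq O(h(a))$, which fails for all $n$ larger than some effectively computable threshold $n_\phi$ depending only on $\phi$. The restriction to \emph{odd} primes $p_n$ in the conclusion is handled by tracking a single factor of $2$ throughout, which contributes at most an additive constant to $h(M)$ and enlarges $n_\phi$ by a bounded amount.
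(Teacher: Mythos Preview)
Your overall architecture matches the paper's, but the argument breaks at two linked places. First, your twist parameter $M$ is the squarefree kernel of $c_n\beta_{n-1}$, and since $\gcd(\beta_{n-1},c_n)\mid c(a)$ there is essentially no cancellation: $h(M)$ is, up to $O(h(a))$, the sum of the heights of the squarefree parts of $c_n$ and of $\beta_{n-1}$ separately. The no-primitive-prime-divisor hypothesis constrains only the primes in the squarefree part of $c_n$; it says nothing about the squarefree part of $\beta_{n-1}=\sigma_a^{n-1}(0)$, and rigid divisibility does not make that part small (generically it has height comparable to $h(\beta_{n-1})\sim 2^{n-1}h(a)$). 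So the inductive claim $h(M)=O(h(a))$ is unsupported and, in fact, typically false. Second, even if $h(M)=O(h(a))$ held, your lower bound $h(\beta_{n-1})\geq C_\phi\cdot 2^{n-1}-D_\phi\,h(a)$ yields only $2^{n-1}=O(h(a))$, i.e.\ $n=O(\log h(a))$, which depends on $a$; this does not give a uniform $n_\phi$.

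The paper repairs both issues simultaneously. For the curve, it uses
\[
Y^2=2^{e_n}d_n\,(X-c(a))\,\phi_a(X),
\]
twisted only by the odd squarefree part $d_n$ of $c_n$; the point with $X=c_{n-1}$ lands on it because $c_{n-1}-c(a)=\beta_{n-2}^2$ is already a perfect square, so no contribution from $\beta_{n-1}$ enters the twist. For the upper bound, the paper does not aim for $O(h(a))$: instead it uses the ``halving'' observation that $p\mid c_m$ and $p\mid c_n$ force $p\mid \phi_a^{\,n-m}(0)$, so every prime of $d_n$ divides some $\phi_a^{t}(\gamma(a))$ or $\phi_a^{t}(0)$ with $t\le \lfloor n/2\rfloor$, giving $h(d_n)=O\!\big(2^{n/2}h(a)\big)$. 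For the lower bound, it keeps the full strength of Ingram's estimate $\hat h_{\sigma_a}(0)\ge \tfrac{1}{32}h(c(a)-\gamma(a))\gtrsim h(a)$, so the left side is $\sim 2^{n}h(a)$. The factors of $h(a)$ then cancel (via the linear-fractional-in-$h(a)$ coefficients), leaving an inequality of the shape $2^{n}\le M_\phi\cdot 2^{n/2}$ and hence a bound on $n$ that is genuinely independent of $a$. Your outline can be salvaged by adopting this curve, the halving trick, and the $h(a)$-weighted lower bound.
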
 
\begin{proof}  
For every $n$, write $\phi_a^n(\gamma(a))=2^{e_n}\cdot d_n\cdot y_n^2$ for $e_n\in\{0,1\}$ and some odd, square-free integer $d_n$. Our goal is to first prove that the $d_n$ grow rapidly, and from there deduce that eventually each new $d_n$ is divisible by a new prime. Note that if the $d_n$'s were to grow slowly, then (ignoring the power of 2 for now), there would be values of $d$ for which the curve $dY^2 = \phi_a^2(X)$ has a rational point with very large coordinates compared to the height of its defining equation. Quantifying this idea leads to a contradiction of Conjecture \ref{conjecture}. 

If $n$ is such that no prime $p_n$ as in (\ref{PrimDiv}) exists, then $d_n$ is a unit or $d_n=\prod p_i$ for some primes $p_i\in \mathbb{Z}$ satisfying $p_i\big\vert\phi_a^{m_i}(\gamma(a))$ and $1\leq m_i\leq n-1$. On the other hand, if $d_n$ is not a unit, then each  $p_i\big\vert\phi_a^{n-m_i}(0)$, since $p_i\big\vert\phi_a^{m_i}(\gamma(a))$ and $p_i\big\vert\phi_a^{n}(\gamma(a))$. To see this, note that \[\phi_a^{n-m_i}(0)\equiv\phi_a^{n-m_i}(\phi_a^{m_i}(\gamma(a)))\equiv\phi_a^n(\gamma(a))\equiv0\Mod{p_i}.\] In particular, when $d_n$ is not a unit, we have the refinement:    
 \begin{equation}{\label{refinement}}  d_n=\prod p_i,\;\;\text{where}\;\; p_i\big\vert\phi_a^{t_i}(\gamma(a))\;\text{or}\; p_i\big\vert\phi_a^{t_i}(0)\;\; \text{for some}\; 1\leq t_i\leq\Big\lfloor \frac{n}{2}\Big\rfloor.   
\end{equation}  

Our goal is to show that $n$ is bounded independently of $a$. To do this, define the elliptic curve 
\begin{equation}{\label{curve}} C_{\phi_a}^{(d_n)}: Y^2=2^{e_n}\cdot d_n\cdot(X-c(a))\cdot \phi_a(X). 
\end{equation}
Note that $C_{\phi_a}^{(d_n)}$ is nonsingular by assumption (1) of Lemma \ref{primdiv}. Then we have the integral point 
\begin{equation}{\label{point}}
\Big(\;\phi_a^{n-1}(\gamma(a))\,,\; 2^{e_n}\cdot d_n\cdot y_n\cdot\big(\phi_a^{n-2}(\gamma(a))-\gamma(a)\big) \Big)\in C_{\phi_a}^{(d_n)}(\mathbb{Z}).  
\end{equation} 
In particular, the Hall-Lang conjecture (cf. Conjecture \ref{conjecture} for $d=3$) on integral points of elliptic curves implies that
\begin{equation}{\label{bound1}}
h\big(\phi_a^{n-1}(\gamma(a))\big)\leq \kappa_1\cdot h(d_n)+\kappa_2\cdot h(a)+\kappa_3
\end{equation} 
for some absolute constants $\kappa_i>0$.  

Assuming the Vojta conjecture, one obtains a similar bound as in (\ref{bound1}); we simply replace $C_{\phi_a}^{(d_n)}$ and the point on (\ref{point}) with the genus two curve $Y^2=2^{e_n}\cdot d_n\cdot(X-c(a))\cdot\phi^2(X)$ and its corresponding point with $X$-coordinate $\phi_a^{n-2}(\gamma(a))$; see \cite{Me} for more on these hyperelliptic curves defined by iteration. Finally, apply the main result of \cite{Heightuniformity}; see also \cite[Example 5]{RationalPoints}.    

Combining (\ref{upperbound}) and (\ref{bound1}), we obtain
\begin{equation}{\label{combin}}
h(\lambda_a^{-1}(\phi_a^{n-1}(\gamma(a))))\leq \kappa_1\cdot h(d_n)+\kappa_{2,\phi}\cdot h(a)+\kappa_{3,\phi}, 
\end{equation} 
where the constants $\kappa_{2,\phi}$ and $\kappa_{3,\phi}$ depend of $\phi$. However, \[\lambda_a^{-1}(\phi_a^{n-1}(\gamma(a)))=\lambda_a^{-1}\circ\phi_a^{n-1}\circ\lambda_a(0)=\sigma_a^{n-1}(0).\]
On the other hand, the canonical height (cf. \cite[3.4]{Silv-Dyn}) satisfies 
\[\big|\hat{h}_{\sigma_a}(x)-h(x)\big|\leq h\big(c(a)-\gamma(a)\big)+\log(2)\leq\deg(c-\gamma)\cdot h(a)+B_{1,\phi}+\log(2),\] for all $x\in\mathbb{Q}$; see \cite[Lemma 12]{ingram}. In particular, we apply this bound to $x=\sigma_a^{n-1}(0)$ and use (\ref{combin}) to conclude that  
\begin{equation}{\label{bound2}} 
2^{n-1}\cdot \hat{h}_{\sigma_a}(0)=\hat{h}_{\sigma_a}(\sigma_a^{n-1}(0))\leq \kappa_1\cdot h(d_n)+\kappa_{2,\phi}'\cdot h(a)+\kappa_{3,\phi}'. 
\end{equation}
Moreover, Ingram has shown in \cite[Proposition 11]{ingram}, that 
\[\hat{h}_{\sigma_a}(x)\geq \frac{1}{32}\max\big\{h\big(c(a)-\gamma(a)\big),1\big\}, \;\;\text{for all wandering points}\;x\in\mathbb{Q}.\] 
By assumption $c(a)-\gamma(a)\notin\{-2,-1,0\}$, so that $0$ is a wandering point of $\sigma_a$ (equivalently, $\phi_a$ is not postcritically finite). To see this, note that if $0$ is not wandering, then $c(a)-\gamma(a)$  belongs to the Mandelbrot set $\mathcal{M}$ over the complex numbers; see \cite[\S4.24]{Silv-Dyn}. In particular, \cite[Proposition 4.19]{Silv-Dyn} implies that $|c(a)-\gamma(a)|\leq2$, where $|\cdot|$ denotes the complex absolute value. Hence the absolute logarithmic height of $c(a)-\gamma(a)$ is at most $\log(2)$. One checks that this implies that $c(a)-\gamma(a)\in\{0,-1,-2\}$ as claimed. 

On the other hand, we know there exists a positive constant $B_{1,\phi}$ as on (\ref{isotrivial}). Consolidating this fact with the lower bound on $\hat{h}_{\sigma_a}(0)$ and the bound on (\ref{bound2}), we obtain that   
\begin{equation}{\label{bound3}} 2^{n-6}\cdot\big(\deg(c-\gamma)\cdot h(a)-B_{1,\phi}\big)\leq \kappa_1\cdot h(d_n)+\kappa_{2,\phi}'\cdot h(a)+\kappa_{3,\phi}'.
\end{equation}
The left hand side of (\ref{bound3}) is of our desired shape. It remains to bound $h(d_n)$ in terms of $h(a)$, to complete the proof of Lemma \ref{primdiv}. To do this, note that (\ref{lowerbound}) and (\ref{refinement}) together imply that 
\begin{equation}
\begin{split}
h(d_n)&\leq\sum_{i=1}^{\lfloor \frac{n}{2}\rfloor}h(\phi_a^i(\gamma(a)))+\sum_{j=1}^{\lfloor \frac{n}{2}\rfloor}h(\phi_a^j(0)) \\
 & \leq \sum_{i=1}^{\lfloor \frac{n}{2}\rfloor}h(\sigma_a^i(0))+\sum_{j=1}^{\lfloor \frac{n}{2}\rfloor}h(\sigma_a^j(\gamma(a)))+n\cdot\big(\deg(\gamma)\cdot h(a)+A_{1,\phi} \big) 
\end{split}
\end{equation}
On the other hand, (\ref{orbitbound}) implies that \[h(\sigma_a^i(0))\leq 2^i\cdot\big(A_{2,\phi}+\deg(c-\gamma)\cdot h(a)\big)\;\;\text{and}\;\; h(\sigma_a^j(\gamma(a)))\leq2^j\cdot\big(h(\gamma(a))+A_{2,\phi}+\deg(c-\gamma)\cdot h(a)\big).\] However, there exists a positive constant $A_{3,\phi}$, such that $h(\gamma(a))\leq \deg(\gamma)\cdot h(a)+A_{3,\phi}$. Hence, 

\begin{equation}{\label{bound4}} 
\begin{split} 
h(d_n)&\leq 2^{\lfloor\frac{n}{2}\rfloor+2}\cdot\big(A_{2,\phi}+\deg(c-\gamma)\cdot h(a)\big)+2^{\lfloor\frac{n}{2}\rfloor+1}\cdot\big(\deg(\gamma)\cdot h(a)+A_{3,\phi}\big) \\
 &\;\;\;\; + n\cdot\big(\deg(\gamma)\cdot h(a)+A_{1,\phi} \big)
\end{split}   
\end{equation}
Combining (\ref{bound3}) and (\ref{bound4}), we see that
\begin{equation}{\label{bound5}} 
\begin{split}
2^{n-6}&\leq\bigg(\frac{\kappa_1\cdot\deg(c-\gamma)\cdot h(a)+\kappa_1\cdot A_{2,\phi}}{\deg(c-\gamma)\cdot h(a)-B_{1,\phi}}\bigg)\cdot 2^{\lfloor\frac{n}{2}\rfloor+2}+ \bigg(\frac{\kappa_1\cdot\deg(\gamma)\cdot h(a)+\kappa_1\cdot A_{3,\phi}}{\deg(c-\gamma)\cdot h(a)-B_{1,\phi}}\bigg)\cdot 2^{\lfloor\frac{n}{2}\rfloor+1} \\
\\
& +\bigg(\frac{\kappa_1\cdot\deg(\gamma)\cdot h(a)+\kappa_1\cdot A_{1,\phi}}{\deg(c-\gamma)\cdot h(a)-B_{1,\phi}}\bigg)\cdot n+\bigg(\frac{\kappa_{2,\phi}'\cdot h(a)+\kappa_{3,\phi}'}{\deg(c-\gamma)\cdot h(a)-B_{1,\phi}}\bigg). 
\end{split}  
\end{equation} 
\\
However, as a real valued function, any linear fractional transformations $\rho(x)=\frac{ax+b}{cx+d}$ is bounded away from its poles. Hence, if we view $h(a)$ as the variable $x$, we see that 
\begin{equation} 2^{n-6}\leq M_{1,\phi}\cdot 2^{\lfloor\frac{n}{2}\rfloor+2}+ M_{2,\phi}\cdot 2^{\lfloor\frac{n}{2}\rfloor+1}+ M_{3,\phi}\cdot n+M_{4,\phi},  
\end{equation}
since $\deg(c-\gamma)\cdot h(a)-B_{1,\phi}>0$. In particular, if $M_\phi:=\max\{M_{i,\phi}\}$, then 
\begin{equation}{\label{finalbound}}
n\leq\max\{6,2\cdot\log_2(M_\phi)+19\}.
\end{equation} 
 Therefore, we have bounded $n$ independently of $a$. This completes the proof of Lemma \ref{primdiv}.       
\end{proof}   
With the lemma in place, we return to the proof of Theorem \ref{Rigidity}. Let $P_\phi(t)$ be the polynomial whose roots cut out conditions (1) and (2) of Lemma \ref{primdiv}, that is 
\begin{equation}{\label{polynomial}}
P_\phi(t):=\phi(\gamma(t))\cdot\phi^2(\gamma(t))\cdot\big(c(t)-\gamma(t)\big)\cdot \big(c(t)-\gamma(t)+1\big)\cdot \big(c(t)-\gamma(t)+2\big).
\end{equation} 
From here, we can define the finite set of exceptional specializations:  
\begin{equation}{\label{set}}
F_\phi:=\bigg\{a\in\mathbb{Z}\;\bigg\vert\;  P_\phi(a)=0\;\;\;\text{or}\;\;\; h(a)\leq \frac{B_{1,\phi}}{\deg(c-\gamma)}\bigg\}.
\end{equation}
\begin{remark} The set $F_\phi$ can be explicitly computed provided that the Nullstellensatz step in computing $B_{1,\phi}$ can be carried effectively; see the proof of the lower bound in \cite[Proposition 3.11]{Silv-Dyn}.  
\end{remark}    
We now set $n_\phi:=1+\max\{2,\;2\cdot\log_2(M_\phi)+19\}$ and suppose that $G_{n_\phi}(\phi_a)\cong\Aut(T_{n_\phi})$ and that $a\notin F_\phi$. To prove that $G_{\infty}(\phi_a)\cong\Aut(T_{\infty})$, we first show that $\phi_a$ is stable. 

If $\phi_a$ is not stable, then \cite[Proposition 4.2]{Jones2} implies that $\phi_a^n(\gamma(a))$ is a square for some $n\geq1$. However, by Lemma \ref{primdiv}, such an $n$ must be less than $n_\phi$. In particular, $G_{n-1}(\phi_a)\cong\Aut(T_{n-1})$, as the Galois group of the larger iterate $\phi_a^{n_\phi}$ is maximal. Moreover, since the full automorphism group of the preimage tree acts transitively on the roots of $\phi_a^{n-1}$, we conclude that $\phi_a^{n-1}$ must be irreducible. 

On the other hand, \cite[Lemma 1]{uniformity} implies that $K_n(\phi_a)/K_{n-1}(\phi_a)$ is not maximal, since $\phi_a^n(\gamma(a))$ is a square in $K_{n-1}(\phi_a)$; in fact, it is already a square over the rational numbers. Therefore, $G_n(\phi_a)\ncong\Aut(T_n)$, contradicting our assumption that $G_{n_\phi}(\phi_a)\cong\Aut(T_{n_\phi})$. We conclude that $\phi_a$ is stable.

Now, let $m$ be any integer and suppose that the subextension $K_{m}(\phi_a)/K_{m-1}(\phi_a)$ is not maximal. In particular, $m>n_{\phi}$ since $G_{n_\phi}(\phi_a)\cong\Aut(T_{n_\phi})$. However, we have shown that $\phi_a$ is stable, hence $\phi_a^m(\gamma(a))\in \big(K_{m-1}(\phi_a)\big)^2$; see \cite[Lemma 1]{uniformity}. Hence, if we write $\phi_a^m(\gamma(a))=2^{e_m}\cdot d_m\cdot y_m^2$ for some $y_m,d_m\in \mathbb{Z}$ such that $d_m$ is a unit or an odd square-free integer, then the primes dividing $d_m$ must ramify in $K_{m-1}(\phi_a)$.

On the other hand, by \cite[Corollary 2, p.159]{Number theory}, we see that the primes which ramify in $K_{m-1}$ must divide the discriminant of $\phi^{m-1}$. Let $\Delta_n$ be the discriminant of $\phi^n$. Then we have the following formula, given in \cite[Lemma 2.6]{Jones2}: 
 \begin{equation}{\label{discriminant}}\Delta_n=\pm\Delta_{n-1}^2\cdot 2^{2^n}\cdot \phi^n(\gamma).
 \end{equation} 
 In particular, if $d_m$ is not a unit, then $d_m=\prod p_i$ for some odd primes $p_i\in \mathbb{Z}$ such that $p_i\big\vert\phi^{m_i}(\gamma)$ and $1\leq m_i\leq m-1$. However, since $m>n_\phi$, Lemma \ref{primdiv} implies that there exists a prime divisor of $d_m$ which is coprime to all lower iterates, a contradiction. Hence, $K_m(\phi_a)/K_{m-1}(\phi_a)$ is maximal for all $m$. It follows that $G_{\infty}(\phi_a)\cong\Aut(T_{\infty})$, completing the first statement of the Theorem \ref{Rigidity}.

Similarly, if $a\notin F_\phi$ and $\phi_a$ is stable, then Lemma \ref{primdiv} implies that the subextensions $K_m(\phi_a)/K_{m-1}(\phi_a)$ are maximal for all $m>n_\phi$. Hence $G_\infty(\phi_a)$ is a finite index subgroup of $\Aut(T_\infty)$, and
\[\big[\Aut(T_\infty): G_\infty(\phi_a)\big]=\big[\Aut(T_{n_\phi}): G_{n_\phi}(\phi_a)\big]= \frac{2^{2^{n_\phi}-1}}{\big[K_{n_\phi}(\phi_a):\mathbb{Q}\big]}\leq \frac{2^{2^{n_\phi}-1}}{2^{n_\phi}}=2^{2^{n_\phi}-n_{\phi}-1},\]
since $\phi_a^{n_\phi}$ is an irreducible polynomial of degree $2^{n_\phi}$ over the rational numbers. In particular, the index bound does not depend on $a$, which completes the proof of the theorem.                         
\end{proof} 
If we fix $\phi$, it is natural to ask to what extent the corresponding $n_\phi$ is computable (at least conjecturally). However, since the constants appearing in Conjecture \ref{conjecture} are not explicit, we cannot make the proof of Theorem \ref{Rigidity} effective. Nonetheless, it is possible to use additional techniques in the theory of rational points on curves to classify the Galois behavior of small iterates and produce a conjectural $n_\phi$. For instance, we make the following conjecture when $\phi(x)=x^2+t$.
\begin{conjecture}{\label{conj:rigidity}} If $\phi_a(x)=x^2+a$, then for all $a\in\mathbb{Z}$,  
\[G_3(\phi_a)\cong\Aut(T_3)\;\;\;\text{implies that}\;\;\;G_\infty(\phi_a)\cong\Aut(T_\infty).\]
In particular, if $a\neq3$, then $G_2(\phi_a)\cong\Aut(T_2)$ implies that $G_\infty(\phi_c)\cong\Aut(T_\infty)$.  
\end{conjecture}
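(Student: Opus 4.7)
The plan is to adapt the scheme of Theorem \ref{Rigidity} to the explicit family $\phi(x) = x^2 + t$ (so $\gamma = 0$, $c(t) = t$, $\deg(c-\gamma) = 1$) and then sharpen the uniformity constant from the ineffective value supplied by Lemma \ref{primdiv} down to $n_\phi = 3$ by a direct rational-point analysis of the low-level iterated curves. The conjecture splits into two claims: (i) $G_3(\phi_a) \cong \Aut(T_3)$ forces $G_\infty(\phi_a) \cong \Aut(T_\infty)$, and (ii) for $a \neq 3$ with $G_2(\phi_a) \cong \Aut(T_2)$, one automatically has $G_3(\phi_a) \cong \Aut(T_3)$; chained together, these yield the full statement.

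For (i) I would follow the endgame of Theorem \ref{Rigidity} and reduce to showing that, under $G_3 \cong \Aut(T_3)$, the square-free part $d_n$ of $\phi_a^n(0)$ carries a prime not dividing any earlier iterate $\phi_a^j(0)$, $1 \leq j \leq n-1$, for every $n \geq 4$. Stability of $\phi_a$ is then automatic: if $\phi_a^n(0)$ were a rational square, then either $n \leq 3$ (contradicting maximality of $G_3$) or $n \geq 4$ (contradicting the primitive-prime statement, since a prime appearing to odd power prevents squareness). For $n$ larger than some threshold $N$, Lemma \ref{primdiv} already suffices, with all of the constants $A_{i,\phi}, B_{1,\phi}$ computable in closed form for this family. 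For $n \in \{4, \ldots, N\}$, the failure of a primitive prime translates via (\ref{refinement}) and (\ref{curve}) into an integral point on the explicit elliptic curve $Y^2 = d \cdot X(X^2+a)$ (or its higher-iterate genus-$2$ analogue $Y^2 = d \cdot X \cdot \phi^2(X)$), where $d$ ranges over the finitely many square-free integers built from primes dividing earlier critical-orbit values. For each such pair $(n,d)$ I would use explicit $2$-descent, Mordell-Weil rank computations, or Chabauty to classify integral points and verify that none is compatible with an integer $a$ satisfying $G_3(\phi_a) \cong \Aut(T_3)$.

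For (ii) the task is purely arithmetic: show that whenever $a, a+1 \notin \mathbb{Q}^2$ and $a \neq 3$, the value $\phi_a^3(0) = a(a^3+2a^2+a+1)$ is not a square in the biquadratic field $K_2(\phi_a) = \mathbb{Q}(\sqrt{a}, \sqrt{a+1})$. This is equivalent to the integer $a^3 + 2a^2 + a + 1$ not lying in any of the four square-classes $\{1, a, a+1, a(a+1)\}$ modulo $\mathbb{Q}^{\times 2}$, which in turn amounts to ruling out exceptional integral points on four explicit curves (three elliptic, one of genus two). The exception at $a = 3$ is witnessed by the integral point $(3, 7)$ on $Y^2 = X^3 + 2X^2 + X + 1$, corresponding to $3^3 + 2\cdot 3^2 + 3 + 1 = 49 = 7^2$; one would compute the Mordell-Weil group of each of these four curves and show that $a = 3$ is the only exception, with the handful of other integral points corresponding to postcritically finite or square specializations (such as $a \in \{-1, 0\}$) already excluded by the hypothesis $a, a+1 \notin \mathbb{Q}^2$.

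The main obstacle is twofold. First, making the large-$n$ threshold $N$ in (i) explicit requires a quantitative form of Vojta/Hall-Lang for the family $Y^2 = d \cdot X(X^2 + a)$; this is not known in the uniformity required, though Baker-type integral-point bounds à la Bugeaud-Mignotte should provide a workable substitute at the cost of a possibly large but finite $N$. Second, the rational-point computations for $n \in \{4, \ldots, N\}$ grow rapidly in difficulty with the genus of the curves involved, and combining them with the finite-range $2$-descent required for (ii) makes the full program a substantial piece of computational arithmetic geometry. This computational burden, together with the unavailability of effective Vojta bounds, is the reason the statement is offered as a conjecture rather than a theorem.
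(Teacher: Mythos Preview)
The paper does not prove this statement: it is explicitly labeled a conjecture, and the only supporting material is the remark that follows it, which cites \cite[Theorem 1.1]{Me} for the partial results $G_3(\phi_a)\cong\Aut(T_3)\Rightarrow G_4(\phi_a)\cong\Aut(T_4)$ and, for $a\neq 3$, $G_2(\phi_a)\cong\Aut(T_2)\Rightarrow G_4(\phi_a)\cong\Aut(T_4)$. There is no argument in the present paper beyond that citation, so there is nothing to match your proposal against line by line.

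That said, your outline is a reasonable elaboration of how one would attack the conjecture, and your part (ii) is essentially the content of the result from \cite{Me} that the remark cites; your identification of $a=3$ via the integral point $(3,7)$ on $Y^2=X^3+2X^2+X+1$ is exactly the exceptional case that forces the hypothesis $a\neq 3$. You also correctly isolate the real obstruction in part (i): the constants in Conjecture~\ref{conjecture} are not effective, so the threshold $N$ coming out of Lemma~\ref{primdiv} cannot currently be pinned down, and Baker-type bounds do not give the required uniformity in both $a$ and $d$.

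One point in your plan for (i) needs reworking. You propose, for each $n\in\{4,\dots,N\}$, to study integral points on curves of the form $Y^2=d\cdot X(X^2+a)$ for the finitely many admissible $d$. But here $d=d_n$ is built from primes dividing earlier critical-orbit values and therefore depends on $a$; as $a$ ranges over $\mathbb{Z}$ you are not looking at finitely many curves but at a two-parameter family, and a curve-by-curve Chabauty or descent is not available. The approach actually used in \cite{Me} (and implicit in the remark) is different: one encodes the failure of maximality at level $n$, given maximality at level $n-1$, directly as a condition on $a$, namely that $\phi_a^n(0)$ lies in one of finitely many square-classes in $K_{n-1}(\phi_a)$; each such class yields a single curve in the variable $a$ (independent of any auxiliary $d$), and it is \emph{those} curves whose rational points one classifies. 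Recasting your step (i) in that form would align it with the method that produces the cited evidence, though carrying it out for all $n$ up to an unknown $N$ remains, as you say, the reason the statement is a conjecture.
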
 
\begin{remark} The evidence for Conjecture \ref{conj:rigidity} comes from \cite[Theorem 1.1]{Me}. There we prove that $G_3(\phi_a)\cong\Aut(T_3)$ implies $G_4(\phi_a)\cong\Aut(T_4)$. Furthermore, we show that if $a\neq3$ and $G_2(\phi_a)\cong\Aut(T_2)$, then $G_4(\phi_a)\cong\Aut(T_4)$. 
\end{remark} 
We close with the proof of Corollary \ref{construction}. 
\begin{proof}[Proof \,(Corollary)] If $\phi$ is not isotrivial and $G_{m_\phi}(\phi)\cong\Aut(T_{m_\phi})$, then it follows from \cite[Theorem 1]{uniformity} that $G_{\infty}(\phi)\cong\Aut(T_{\infty})$. In particular, $\phi(\gamma)\cdot\phi^2(\gamma)\neq 0$ and there exists a finite set $F_\phi$ and an integer $n_\phi$ such that 
\begin{equation}{\label{implies}}
G_{n_\phi}(\phi_a)\cong\Aut(T_{n_\phi})\;\; \text{implies that}\;\;G_{\infty}(\phi_a)\cong\Aut(T_{\infty}),
\end{equation} 
for all $a\notin F_\phi$; see Theorem \ref{Rigidity} above. On the other hand, since $G_{\infty}(\phi)\cong\Aut(T_{\infty})$, we conclude that $G_{n_\phi}(\phi)\cong\Aut(T_{n_\phi})$ and Hilbert's irreducibility theorem implies that the set 
\begin{equation}{\label{Z}}
Z_\phi:=\big\{a\in\mathbb{Z}\;\big\vert\; G_{n_\phi}(\phi_a)\not\cong\Aut(T_{n_\phi}) \big\}
\end{equation} 
is thin; see \cite[Theorem 3.4.1]{Serre}. Hence, the set $E_\phi:= Z_\phi\cap F_\phi$ is also thin, and if $a\not\in E_\phi$, then (\ref{implies}) and (\ref{Z}) imply that $G_{\infty}(\phi_a)\cong\Aut(T_{\infty})$. In particular, the density $\delta(\mathcal{P}_{\phi_a}(b))$ of prime divisors of $\mathcal{O}_{\phi_a}(b)$ is zero; see \cite[Theorem 4.1]{R.Jones}. This completes the proof of Corollary \ref{construction}.               
\end{proof} 

\textbf{Acknowledgements:} It is a pleasure to thank Joe Silverman for the many discussions related to the work in this paper. I also thank Michael Stoll for relaying to me the question of Odoni regarding the existence of the thin set $E_\phi$ for the family $\phi_a(x)=x^2+a$ (cf. Corollary \ref{construction}).


\begin{thebibliography}{13}
\bibitem{B-J} N. Boston and R. Jones, \emph{The image of an arboreal Galois representation.} Pure and Applied Mathematics Quarterly 5(1) (Special Issue: in honor of Jean-Pierre Serre, Part 2 of 2): 213-225, (2009).
\bibitem{Tucker} C. Gratton, K. Nguyen, and T. Tucker. \emph{ABC implies primitive prime divisors in arithmetic dynamics.} Bull. London Math. Soc. 45: 1194-1208, (2013). 
\bibitem{uniformity} W. Hindes, \emph{Galois uniformity in quadratic dynamics over $k(t)$}, J. Number Theory, in press, arXiv:1405.0630, (2014). 
\bibitem{Me} W. Hindes, \emph{The arithmetic of curves defined by iteration}. Acta Arith., in press, arXiv:1305.0222, (2014).
\bibitem{Heightuniformity} Su-Ion Ih, \emph{Height uniformity for algebraic points on curves}, Compositio Math, 134: 35-57, (2002).  
\bibitem{ingram} P. Ingram, \emph{Lower bounds on the canonical height associated to the morphism $\Phi(z)=z^d+c$}, Monatshefte f\"{u}r Mathematik, 157(1):69-89, (2009). 
\bibitem{Jones1} R-Jones, \emph{An iterative construction of irreducible polynomials that are reducible modulo every prime}. J. of Algebra 369: 114-128, (2012).  
\bibitem{R.Jones} R. Jones,  \emph{Galois representations from pre-image trees: an arboreal survey,}\; Pub. Math. Besancon, 107-136, (2013).
\bibitem{Jones2} R. Jones, \emph{The density of prime divisors in the arithmetic dynamics of quadratic polynomials.} J. Lond. Math. Soc. 78.(2): 523-544, (2008).
\bibitem{Fermat} M. Krizek, F. Luca, and L. Somer, \emph{17 lectures on Fermat numbers: from number theory to geometry}, Springer, 2001.   
\bibitem{Number theory} W. Narkiewicz, \emph{Elementary and analytic theory of algebraic numbers}. Springer, (2004).
\bibitem{Serre} J-P. Serre, \emph{Topics in Galois theory}. Research Notes in Mathematics 1, Jones and Bartlett Publishers, (1992). 
\bibitem{Silv-Vojta} J. Silverman, \emph{Primitive prime divisors, dynamical Zsigmondy sets, and Vojta's conjecture.} J. Number Theory, 133: 2948-2963, (2013).
\bibitem{Silv-Dyn} J. Silverman, \emph{The arithmetic of dynamical systems}, Vol. 241, Springer, (2007). 
\bibitem{SilvElliptic} J. Silverman, \emph{Arithmetic of elliptic curves,} Graduate Texts in Mathematics, Springer-Verlag, GTM 106, 1986. Expanded 2nd Edition, (2009).
\bibitem{RationalPoints} M. Stoll, \emph{Rational points on curves}, J. Th\'{e}or. Nombres Bordeaux 23(1): 257-277, (2011).  
\bibitem{surjectivity} A. Vasiu. \emph{Surjectivity criteria for p-adic representations, Part I,} Manuscripta Mathematica 112(3): 325-355, (2003).
\end{thebibliography}
\end{document}